\newenvironment{aenumerate}{%
	\begin{enumerate}[label=(\alph{*}), ref=(\alph{*})]
}{%
	\end{enumerate}%
}
\tikzset{commutative diagrams/arrow style=math font}
\newcommand{\MHM}{\operatorname{MHM}}
\newcommand{\Dmod}{\mathscr{D}}
\newcommand{\Mmod}{\mathcal{M}}
\newcommand{\Nmod}{\mathcal{N}}
\newcommand{\shT}{\mathscr{T}}
\newcommand{\derL}{\mathbf{L}}
\newcommand{\decal}[1]{\lbrack #1 \rbrack}
\newcommand{\shH}{\mathcal{H}}
\newcommand{\tensor}{\otimes}
\newcommand{\NN}{\mathbb{N}}
\newcommand{\ZZ}{\mathbb{Z}}
\newcommand{\QQ}{\mathbb{Q}}
\newcommand{\CC}{\mathbb{C}}
\newcommand{\menge}[2]{\bigl\{ \thinspace #1 \thinspace\thinspace \big\vert%
\thinspace\thinspace #2 \thinspace \bigr\}}
\newcommand{\Menge}[2]{\Bigl\{ \thinspace #1 \thinspace\thinspace \Big\vert%
\thinspace\thinspace #2 \thinspace \Bigr\}}
\DeclareMathOperator{\Supp}{Supp}
\DeclareMathOperator{\Res}{Res}
\DeclareMathOperator{\Ch}{Ch}
\newcommand{\define}[1]{\emph{#1}}
\newcommand{\lie}[2]{\lbrack #1, #2 \rbrack}
\newcommand{\shf}[1]{\mathscr{#1}}
\newcommand{\OX}{\shf{O}_X}
\newcommand{\OmX}{\Omega_X}
\newcommand{\omX}{\omega_X}
\newcommand{\restr}[1]{\big\vert_{#1}}
\newcommand{\fl}{f_{\ast}}
\newcommand{\jl}{j_{\ast}}
\newcommand{\ju}{j^{\ast}}
\newcommand{\ius}{i^{!}}
\newcommand{\il}{i_{\ast}}
\newcommand{\iu}{i^{\ast}}
\newcommand{\into}{\hookrightarrow}
\newcommand{\shF}{\shf{F}}
\newcommand{\shO}{\shf{O}}
\newcommand{\theoremref}[1]{\hyperref[#1]{Theorem~\ref*{#1}}}
\newcommand{\lemmaref}[1]{\hyperref[#1]{Lemma~\ref*{#1}}}
\newcommand{\definitionref}[1]{\hyperref[#1]{Definition~\ref*{#1}}}
\newcommand{\propositionref}[1]{\hyperref[#1]{Proposition~\ref*{#1}}}
\newcommand{\conjectureref}[1]{\hyperref[#1]{Conjecture~\ref*{#1}}}
\newcommand{\corollaryref}[1]{\hyperref[#1]{Corollary~\ref*{#1}}}
\newcommand{\exampleref}[1]{\hyperref[#1]{Example~\ref*{#1}}}
\newcommand{\Db}{\mathrm{D}^b}
\newcommand{\Dbcoh}{\mathrm{D}_{\mathit{coh}}^b}
\newcommand{\OZ}{\shO_Z}
\newcommand{\omXY}{\omega_{X/Y}}
\newcommand{\omXYnu}{\omXY^{\nu}}
\newcommand{\Sh}[1]{\hat{S}^{#1}}
\newcommand{\Hev}{H_{\mathit{ev}}}
\newcommand{\shHev}{\shH_{\mathit{ev}}}
\newcommand{\famY}{\mathscr{Y}}
\newcommand{\xit}{\xi'}
\newcommand{\Mt}{M^{\boxtimes m}}
\newcommand{\Mmodt}{\Mmod^{\boxtimes m}}
\newcommand{\QHX}{\QQ_X^H}
\newtheorem{theorem}{Theorem}[section]
\newtheorem{lemma}[theorem]{Lemma}
\theoremstyle{definition}
\newtheorem{definition}[theorem]{Definition}
\newtheorem{example}[theorem]{Example}
\theoremstyle{remark}
\numberwithin{equation}{section}
\begin{document}

\title{Weak positivity via mixed Hodge modules}

\author{Christian Schnell}
\address{Department of Mathematics, Stony Brook University,
Stony Brook, NY 11794, USA}
\email{\tt cschnell@math.sunysb.edu}
\thanks{}

\dedicatory{Dedicated to Herb Clemens}

\subjclass[2010]{Primary 14E30; Secondary 14F10}
\keywords{Weak positivity; Mixed Hodge module}

\date{}

\begin{abstract}
We prove that the lowest nonzero piece in the Hodge filtration of a mixed Hodge
module is always weakly positive in the sense of Viehweg.
\end{abstract}

\maketitle

\section*{Foreword}

{\small ``Once upon a time, there was a group of seven or eight beginning graduate
students who decided they should learn algebraic geometry. They were going to do it
the traditional way, by reading Hartshorne's book and solving some of the exercises
there; but one of them, who was a bit more experienced than the others, said to his
friends: `I heard that a famous professor in algebraic geometry is coming here soon;
why don't we ask him for advice.' Well, the famous professor turned out to be a very
nice person, and offered to help them with their reading course. In the end, four out
of the seven became his graduate students \dots and they are very grateful for the
time that the famous professor spent with them!''}

\section{Introduction}

\subsection{Weak positivity}

The purpose of this article is to give a short proof of the Hodge-theoretic part of
Viehweg's weak positivity theorem. 

\begin{theorem}[Viehweg] \label{thm:Viehweg}
Let $f \colon X \to Y$ be an algebraic fiber space, meaning a surjective morphism
with connected fibers between two smooth complex projective algebraic varieties. Then
for any $\nu \in \NN$, the sheaf $\fl \omXYnu$ is weakly positive.
\end{theorem}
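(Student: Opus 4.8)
My plan is to prove this in two steps: first the abstract statement announced in the abstract — \emph{if $M$ is a mixed Hodge module on a smooth projective variety $X$ and $p_0$ is the smallest integer with $F_{p_0}M\neq 0$, then $F_{p_0}M$ is weakly positive on its support} — and then to deduce \theoremref{thm:Viehweg} from it. For the abstract statement I would argue by reductions followed by one computation. Because the Hodge filtration is strict with respect to the weight filtration, $F_{p_0}M$ is a successive extension of the sheaves $F_{p_0}\operatorname{gr}^W_k M$, each of which is either zero or the lowest Hodge piece of the polarizable pure Hodge module $\operatorname{gr}^W_k M$; since weak positivity is stable under extensions and direct summands (Viehweg), this reduces the problem to a polarizable pure Hodge module, and then, by the decomposition by strict support, to one with strict support $X$. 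Such an $M$ is the intermediate extension $j_{!*}(\mathbb{V}\decal{\dim X})$ of a polarizable variation of Hodge structure $\mathbb{V}$ on a dense open set $j\colon U\into X$, and by Saito's structure theory $F_{p_0}M$ is a torsion-free sheaf on $X$, locally free on $U$, which restricts on $U$ to the lowest Hodge subbundle $E$ of $\mathbb{V}$.

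For the computation, fix $m\geq 1$. The symmetric power $S^m\mathbb{V}$ — a direct summand of the variation $\mathbb{V}^{\otimes m}$ underlying the principal constituent of the restriction of $\Mt$ to the small diagonal — is again a polarizable variation of Hodge structure on $U$, so $M_m := j_{!*}(S^m\mathbb{V}\decal{\dim X})$ is a polarizable pure Hodge module with strict support $X$. Its lowest Hodge piece $\mathcal F_m$ is torsion-free on $X$, restricts on $U$ to $S^m E$, and — by the compatibility of the lowest Hodge piece of an intermediate extension with symmetric powers along the boundary divisor — it is comparable to the reflexive symmetric power $\Sh{[m]}(F_{p_0}M)$ in the way needed below. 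Now apply Saito's vanishing theorem, the Hodge-module analogue of Kollár's vanishing, in the form $H^i(X,\mathcal F_m\otimes\shf L)=0$ for all $i>0$ and all ample line bundles $\shf L$. Fixing once and for all a very ample $\shf B$ and setting $\shf A:=\shf B^{\otimes(\dim X+1)}$, Castelnuovo--Mumford regularity shows that $\mathcal F_m\otimes\shf A$ is globally generated, and hence — restricting to $U$, where $\mathcal F_m$ agrees with $\Sh{[m]}(F_{p_0}M)$ — that $\Sh{[m]}(F_{p_0}M)\otimes\shf A$ is generated over $U$ by global sections. Since $\shf A$ does not depend on $m$, Viehweg's criterion — a torsion-free sheaf $\shF$ on a smooth projective variety is weakly positive provided $\Sh{[m]}\shF\otimes\shf A$ is generically globally generated for all $m$ and some fixed ample $\shf A$ — gives the weak positivity of $F_{p_0}M$.

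To obtain \theoremref{thm:Viehweg}: for $\nu=1$, Saito's formula for the lowest piece of the Hodge filtration of a direct image exhibits $\fl\omXY$ — after the standard passage between the $\omX$- and $\omXY$-normalizations, harmless by the birational invariance of weak positivity — as the lowest Hodge piece of a polarizable pure Hodge module with strict support $Y$, a direct summand of a cohomology module of $\fl\QHX\decal{\dim X}$; the abstract statement then applies. For general $\nu$, a cyclic covering construction as in Viehweg's work, together with the stability of weak positivity under direct summands, reduces the claim to the case $\nu=1$.

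I expect the main obstacle to be not the final vanishing argument but the comparison of sheaves along the boundary: one must control precisely the difference between the lowest Hodge piece of $j_{!*}(S^m\mathbb{V}\decal{\dim X})$ and $\Sh{[m]}$ of the lowest Hodge piece of $j_{!*}(\mathbb{V}\decal{\dim X})$, and check that this difference — which should be absorbable, perhaps after a finite base change making the local monodromy unipotent and using that weak positivity descends along finite morphisms — does not affect the conclusion. A secondary point requiring care is isolating the exact form of Saito's vanishing theorem that is used, namely the vanishing of the higher cohomology of the lowest Hodge \emph{piece} itself rather than of the associated graded de Rham complex.
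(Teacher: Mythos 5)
Your overall skeleton matches the paper's: prove the abstract statement (the lowest nonzero Hodge piece of a mixed Hodge module is weakly positive where it is a variation) via Saito's vanishing theorem, a power-raising device, and Castelnuovo--Mumford regularity, then deduce \theoremref{thm:Viehweg} by taking $M=H^0\fl\QHX\decal{\dim X}$ for $\nu=1$ and branched coverings for $\nu\ge2$. Your preliminary reduction to pure Hodge modules with strict support is correct but unnecessary; the paper's argument applies directly to the mixed case.

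The gap is exactly where you flag it, and it is not a detail -- it is the step that requires a new idea. You take $M_m=j_{!*}(S^m\mathbb{V}\decal{\dim X})$ and apply Saito vanishing to its lowest Hodge piece $\mathcal F_m$, but to transport global generation from $\mathcal F_m\otimes\shf A$ to $\Sh{m}(F_{p_0}M)\otimes\shf A$ you need a morphism of sheaves defined on all of $X$, not merely the generic identification $\mathcal F_m|_U\simeq S^mE\simeq\Sh{m}(F_{p_0}M)|_U$. Two torsion-free sheaves agreeing on a dense open set need not admit a map between them when the complement is a divisor, and the Hodge filtration of an intermediate extension near the boundary (governed by the Kashiwara--Malgrange $V$-filtration) is not a priori compatible with symmetric powers in the absence of a unipotence hypothesis. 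Your proposed repair -- finite base change to unipotent local monodromy plus descent of weak positivity -- would reintroduce precisely the covering, monodromy, and normal-crossings arguments that the mixed-Hodge-module method is designed to replace. The paper avoids the issue by never forming $j_{!*}(S^m\mathbb{V})$: it forms $\Mt$ on $X^m$ and restricts along the diagonal via $\Delta^!$. \lemmaref{lem:ius} then produces, purely by the adjunction $\Delta_*\dashv\Delta^!$, a canonical morphism $F_{mk-r}\Nmod\to(F_k\Mmod)^{\tensor m}$ on $X$ that is an isomorphism over $U$; this hands you exactly the comparison map whose existence your argument takes for granted. (That the target is the tensor power rather than $\Sh{m}$ is harmless, since $\Sh{m}$ is a quotient of the tensor power over $U$.)
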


The notion of weak positivity was introduced by Viehweg, as a kind of birational
version of being nef. We begin by recalling the -- somewhat cumbersome --
definition. Let $\shF$ be a torsion-free coherent sheaf on a smooth algebraic variety
$X$.  We shall denote by $S^n(\shF)$ the $n$-th symmetric power of $\shF$, and by
$\Sh{n}(\shF)$ its reflexive hull. A more concrete definition is that 
\[
	\Sh{n}(\shF) = \jl S^n(\ju \shF),
\]
where $j$ is the inclusion of the maximal open subset over which $\shF$ is
locally free. The formula holds because the complement has codimension at least two
in $X$.

\begin{definition}
A torsion-free coherent sheaf $\shF$ is \define{weakly positive} on an open subset $U
\subseteq X$ if, for every ample line bundle $L$ on $X$, and for every $m \in \NN$, the
restriction morphism
\[
	H^0 \bigl( X, \Sh{mn}(\shF) \tensor L^{\tensor n} \bigr)
		\to \Sh{mn}(\shF) \tensor L^{\tensor n} \restr{U}
\]
is surjective for all sufficiently large $n \in \NN$.
\end{definition}

If we think of $\Sh{mn}(\shF) \tensor L^{\tensor n}$ as being the $mn$-th
symmetric power of the (non-existent) object $\shF \tensor L^{\tensor 1/m}$, then the
definition means the following: after tensoring $\shF$ by an arbitrarily small
fraction of an ample line bundle, all sufficiently large symmetric powers are
generated over $U$ by their global sections.

\begin{example}
Weak positivity on $X$ is equivalent to nefness. For line bundles, being weakly 
positive is the same thing as being pseudo-effective.
\end{example}

The most notable application of the weak positivity theorem is Viehweg's proof of the
Iitaka conjecture over a base of general type. Let $f \colon X \to Y$ be an algebraic
fiber space, with general fiber $F$; Iitaka's conjecture predicts that
\[
	\kappa(X) \geq \kappa(Y) + \kappa(F).
\]
Viehweg proved the conjecture when $Y$ is of general type. Roughly speaking, he uses
the weak positivity of $\fl \omXYnu$, together with the fact that $\omega_Y$ is big,
to produce sufficiently many global sections of $\omX^{\nu}$. They survey article
\cite{Viehweg} contains a nicely written account of these matters.

\subsection{Related results}

The idea of using methods from Hodge theory to prove the positivity of certain
sheaves goes back at least to Fujita and Kawamata. To put Viehweg's theorem in
context, let me briefly mention a few other positivity theorems for the direct image
of $\omXY$ and its powers:

\begin{enumerate}
\item Kawamata \cite{Kawamata} proved that if the fiber space $f \colon X \to Y$ is
``nice'', then $\fl \omXY$ is locally free and nef. Nice means, roughly speaking,
that the singularities of $f$ should occur over a normal crossing divisor, and that
the local monodromy should be unipotent. The proof uses Hodge theory.
\item Viehweg deduced from Kawamata's result that $\fl \omXY$ is always weakly
positive; by studying certain well-chosen branched coverings of $X$, he obtained the
same result for $\fl \omXYnu$ with $\nu \geq 2$.
\item Koll\'ar gave a simpler proof of Viehweg's theorem using his vanishing theorem
for higher direct images of dualizing sheaves.
\item Earlier this year, Fujino, Fujisawa, and Saito \cite{FujinoFujisawaSaito}
proved a general nefness theorem for mixed Hodge modules that has Kawamata's theorem
as a special case.
\end{enumerate}

There are many other results of this type; the list above contains only those
that are most closely related to the topic of this paper.

\subsection{Main theorem}

Let $\shH$ be a polarizable variation of Hodge structure on a the complement of a
divisor $D$ in a smooth projective algebraic variety $X$, and let $p \in \ZZ$ be the
largest integer for which the Hodge bundle $F^p \shH$ is nontrivial. The general
philosophy that emerges from Kawamata's work \cite{Kawamata} is that $F^p \shH$
extends in a canonical way to a nef vector bundle on $X$, provided that $D$ is a
normal crossing divisor and $\shH$ has unipotent local monodromy. (Without these
assumptions, there are examples \cite{FujinoFujisawaSaito}*{Remark~4.6(v)} where
nefness fails.) The analogy with Viehweg's theorem suggests that, without these
assumptions, there should still be a canonical extension of $F^p \shH$ that is weakly
positive. 

This expectation turns out to be correct, if one takes for the extension of $F^p
\shH$ the one given by Saito's theory of mixed Hodge modules. Here is the
result, proved in collaboration with Mihnea Popa.

\begin{theorem} \label{thm:main}
Let $M$ be a mixed Hodge module on a smooth complex projective variety $X$. If the
underlying filtered $\Dmod$-module satisfies $F_{k-1} \Mmod = 0$, then the coherent
sheaf $F_k \Mmod$ is weakly positive on the open subset where $M$ is a variation
of mixed Hodge structure.
\end{theorem}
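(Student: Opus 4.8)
The plan is to reduce the statement to a positivity property that we already understand, namely the nefness of the lowest Hodge piece in the geometric (``nice'') situation—Kawamata's theorem as re-proved for mixed Hodge modules in \cite{FujinoFujisawaSaito}—and then to propagate it along a branched covering, exactly in the spirit of Viehweg's original deduction of weak positivity of $\fl\omXYnu$ from the $\nu=1$ case. Concretely, I would first observe that weak positivity is a birational notion and can be tested after pulling back along a generically finite surjection; so I am free to replace $X$ by any smooth projective modification and $M$ by its (shifted) pullback. The key reduction is then to arrange, via resolution of singularities applied to the support of $M$ and to the locus where $M$ fails to be a variation of mixed Hodge structure, that this ``bad locus'' becomes a normal crossing divisor $D$, and—this is where I would invoke the semistable reduction / unipotent reduction machinery—that after a further finite cover the local monodromies are unipotent. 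At that point $F_k\Mmod$ restricted to $X\setminus D$ is the top Hodge bundle $F^p\shH$ of an honest polarizable variation, and $F_k\Mmod$ itself is its canonical Deligne-type extension, which is a \emph{nef} locally free sheaf by the cited nefness theorem.

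The second half of the argument is the descent from the nef extension upstairs to weak positivity of $F_k\Mmod$ downstairs. Here I would mimic Viehweg: given a generically finite $\pi\colon X'\to X$ with $X'$ smooth and the pulled-back module ``nice'', one has a trace/splitting $\pi_\ast\shO_{X'}\to\shO_X$ (after a suitable blow-up, using that $\pi$ is generically étale), and $F_k\Mmod$ is a direct summand of $\pi_\ast$ of the corresponding nef sheaf on $X'$, up to modification over a codimension-$\geq 2$ set—which is harmless because everything in sight is reflexive and the $\Sh{n}$ operation is insensitive to such loci. One then checks that nefness of a locally free sheaf implies the global generation statement in the definition of weak positivity after twisting by a small fraction of an ample bundle; this is standard once one knows that $\pi^\ast L$ is still big and nef and that $\Sh{mn}$ commutes with the relevant pushforwards over the open locus $U$. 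The compatibility $F_k\bigl(\pi^\ast M\bigr)\cong\pi^\ast F_k\Mmod$ (on the smooth part, hence everywhere after taking reflexive hulls) is guaranteed by Saito's theory because pulling back a mixed Hodge module along a morphism behaves well on the lowest piece of the Hodge filtration when $F_{k-1}\Mmod=0$.

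The main obstacle I anticipate is the \emph{functoriality of the Hodge filtration under the birational and covering operations}: one must know precisely how $F_k\Mmod$ transforms under $\pi^\ast$, under restriction to the normal crossing complement, and under the canonical extension across $D$, and that the sheaf produced by Saito's formalism really is the Deligne canonical extension of $F^p\shH$ in the unipotent case. This is exactly the technical input where the strong vanishing properties of mixed Hodge modules (strictness of the Hodge filtration, the fact that $F_k\Mmod$ with $F_{k-1}\Mmod=0$ behaves like the bottom of a Hodge complex and hence satisfies Kollár-type vanishing) do the work. A secondary difficulty is bookkeeping the codimension-$\geq 2$ ambiguities so that the direct-summand argument produces an honest inclusion of reflexive sheaves; I would handle this uniformly by always passing to $\Sh{n}$ and working over the big open locus, invoking the displayed formula $\Sh{n}(\shF)=\jl S^n(\ju\shF)$ from the introduction. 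If one wanted a proof that avoids branched coverings entirely, an alternative—and perhaps cleaner—route would be to run the global-generation argument directly on $X$ using the Kollár-type vanishing theorem for $F_k\Mmod$ (the mixed Hodge module analogue of $R^if_\ast\omega_X$-vanishing), in the style of Kollár's simplification of Viehweg's theorem mentioned in item~(3) above; I expect the actual proof in the paper follows that streamlined path rather than the covering-theoretic one sketched here.
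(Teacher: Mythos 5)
Your main sketch---reducing to Kawamata-style nefness via blow-ups, semistable/unipotent reduction, and a Viehweg-type descent along branched coverings---is a genuinely different route from the one taken in the paper, and it has a real gap: the compatibility $F_k(\pi^\ast M)\cong \pi^\ast F_k\Mmod$ that you treat as ``guaranteed by Saito's theory'' is not a black box. For a generically finite surjection $\pi$ (even after blowing up), the pullback $\pi^\ast M$ lives in the derived category $\Db\MHM$, the pullback is not non-characteristic in general, and the Hodge filtration of the cohomology modules of $\pi^\ast M$ is controlled by a spectral sequence rather than given by $\pi^\ast F_k\Mmod$ outright. Likewise the ``semistable/unipotent reduction'' step, which is natural for families of varieties, has no uniform analogue for an abstract mixed Hodge module: you would need to produce, for an arbitrary $M$, a generically finite cover after which the pulled-back module becomes the canonical extension of a unipotent VMHS, and that is essentially as hard as the theorem itself. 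The paper explicitly acknowledges that the statement \emph{could} be deduced from Kawamata's theorem, so your plan is sound in spirit, but it is not a proof as written; the functorial bookkeeping you flag as the ``main obstacle'' is exactly where it breaks.

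Your closing guess is the correct one: the paper does run a Koll\'ar-style vanishing argument directly on $X$, with no coverings. The two ingredients you would need to make that route precise, and which the paper supplies, are (i) Saito's vanishing theorem (\theoremref{thm:Saito}), which gives $0$-regularity, hence global generation, of $\omega_X\tensor L^{\tensor(n+1)}\tensor F_k\Mmod$ whenever $F_{k-1}\Mmod=0$; and (ii) a device for producing high tensor powers of $F_k\Mmod$ inside the lowest Hodge piece of some other mixed Hodge module, so that (i) applies to the power. The paper's device replaces Koll\'ar's fibre product $X\times_Y\dotsm\times_Y X$ by the diagonal embedding $\Delta\colon X\into X^m$ together with the exterior product $\Mt$: one has $F_{mk-1}\Mmodt=0$ and $F_{mk}\Mmodt=(F_k\Mmod)^{\boxtimes m}$, and \lemmaref{lem:ius} (the behaviour of the lowest Hodge piece under $\ius$) produces a morphism $F_{mk-r}\Nmod\to(F_k\Mmod)^{\tensor m}$ which is an isomorphism over the VMHS locus $U$. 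Applying (i) to $\Nmod$ then generates $\omega_X\tensor L^{\tensor(n+1)}\tensor(F_k\Mmod)^{\tensor m}$ over $U$ for every $m$, which is weak positivity. This diagonal/exterior-product trick is the piece your proposal is missing; it is what lets the vanishing theorem, which a priori only sees $F_k\Mmod$ itself, control arbitrarily high tensor powers.
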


Viehweg's theorem (in the case $\nu = 1$) is an immediate consequence. Given a
morphism $f \colon X \to Y$ as in \theoremref{thm:Viehweg}, we let $M$ be the direct
image of the trivial Hodge module on $X$; to be precise, $M = H^0 \fl \QHX
\decal{\dim X}$, in Saito's notation. Setting $k = \dim Y - \dim X$, it can be shown
that 
\[
	F_{k-1} \Mmod = 0 \quad \text{and} \quad F_k \Mmod \simeq \fl \omXY,
\]
and \theoremref{thm:main} implies that this sheaf is weakly positive on the smooth
locus of $f$. The weak positivity of $\fl \omXYnu$ for $\nu \geq 2$ follows as in
Viehweg's original argument by considering certain branched coverings of $X$.

\section{Mixed Hodge modules}

Although it looks more general, \theoremref{thm:main} is not really a new result,
because it could  be deduced from Kawamata's theorem. The point of presenting it is
that mixed Hodge modules appear to be the natural setting: as we will see towards the
end of the talk, the proof of the theorem is extremely short. This may be only of
academic interest in this case, where we are basically reproving an existing result
-- but in other situations, the use of mixed Hodge modules may allow us to go much
further than existing methods.\footnote{At this point in the lecture, Wilfried Schmid
interjected, ``I think it's fair to say that mixed Hodge modules have really been
greatly underused.''} 

Before explaining the proof of \theoremref{thm:main}, I would like to say a few words
about mixed Hodge modules and their applications to algebraic geometry; I will also
try to motivate their use with a specific example. To simplify the discussion, let me
concentrate on the case of pure Hodge modules.

\subsection{Variations of Hodge structure}

Hodge modules are a generalization of variations of Hodge structure. For applications
to algebraic geometry, the essential features of a variation of Hodge structure $H$
are the following:
\begin{enumerate}
\item A holomorphic vector bundle $\shH$ with a flat connection $\nabla \colon \shH
\to \OmX^1 \tensor \shH$;
\item a Hodge filtration $F^{\bullet} \shH$ by holomorphic subbundles, subject to the
relation
\[
	\nabla(F^p \shH) \subseteq \OmX^1 \tensor F^{p-1} \shH,
\]
called Griffith transversality (by everyone except Griffiths, who calls it
the infinitesimal period relation).
\end{enumerate}

In practice, there are various pieces of additional data: a rational structure; a
polarization; a weight filtration (in the mixed case); etc. But the vector bundles
$\shH$ and $F^{\bullet} \shH$ and their properties are what matters most for
algebraic geometers.

Variations of Hodge structure are ``smooth'' objects, arising for example from
families of smooth projective varieties. One can include objects with singularities
by generalizing vector bundles with flat connection to $\Dmod$-modules. In fact,
$(\shH, \nabla)$ is a special case of a regular holonomic $\Dmod$-module. The
connection gives rise to an action by tangent vector fields (= linear differential
operators of order one), according to the formula
\[
	\xi \cdot s = \nabla_{\xi}(s) \quad 
		\text{for $\xi \in \Gamma(U, \shT_X)$ and $s \in \Gamma(U, \shH)$}.
\]
This makes $\shH$ into a left $\Dmod$-module: on the one hand, $\nabla$ satisfies the
Leibniz rule
\[
	\nabla_{\xi}(fs) = (\xi f) s + f \, \nabla_{\xi}(s), 
\]
which amounts to the relation $\lie{\xi}{f} = \xi f$; on the other hand, $\nabla$
flat means that
\[
	\nabla_{\xi} \bigl( \nabla_{\eta}(s) \bigr) 
		- \nabla_{\eta} \bigl( \nabla_{\xi}(s) \bigr) = \nabla_{\lie{\xi}{\eta}}(s),
\]
which amounts to the relation $\xi \eta - \eta \xi = \lie{\xi}{\eta}$. This gives
$\shH$ the structure of a left module over the sheaf of linear differential
operators $\Dmod_X$. The Hodge filtration defines a filtration of $\shH$ that is
compatible with the order of differential operators. Indeed, if we set $F_k \shH =
F^{-k} \shH$ to get an increasing filtration, then we have
\[
	\shT_X \cdot F_k \shH \subseteq F_{k+1} \shH
\]
because of Griffiths transversality. The resulting $\Dmod$-module is regular
holonomic; its so-called characteristic variety, a subset of the cotangent bundle
$T^{\ast} X$, is precisely the zero section. This corresponds to the fact that,
locally on $X$, solutions to the equation $\nabla s = 0$ can be analytically
continued in every direction.

\subsection{Hodge modules}

Now let $X$ be a smooth quasi-projective variety; Hodge modules can be defined much
more generally, including on singular varieties, but we shall focus on this case
because it simplifies the discussion. The essential features of a Hodge module are
then the following:
\begin{enumerate}
\item A regular holonomic $\Dmod_X$-module $\Mmod$;
\item a compatible filtration $F_{\bullet} \Mmod$ by coherent $\OX$-submodules that
is ``good'', meaning that $\shT_X \cdot F_k \Mmod \subseteq F_{k+1} \Mmod$, with
equality for $k \gg 0$. 
\end{enumerate}
As in the case of variations of Hodge structure, there are several additional pieces
of data, such as a polarization; a rational structure (in the form of a perverse sheaf
with coefficients in $\QQ$); a weight filtration (in the mixed case); etc. There is
also a long list of axioms that need to be satisfied in order for the pair $(\Mmod,
F_{\bullet} \Mmod)$ to be called a Hodge module.

A basic fact is that every variation of Hodge structure defines a Hodge module; we
saw already more or less how this works. In the other direction, every Hodge
module is generically a variation of Hodge structure. 

\begin{example}
In the situation of \theoremref{thm:main}, there is a Zariski-open subset over which
$M$ is a variation of mixed Hodge structure. Under the indexing conventions explained
above, the condition $F_{k-1} \Mmod = 0$ translates into $F^{-k+1} \shH = 0$, and the
coherent sheaf $F_k \Mmod$ in the theorem is an extension of the Hodge bundle $F^{-k}
\shH$. Note that the theorem allows the possibility that $\Supp \Mmod \neq X$: in
that case, $\shH = 0$ and the assertion about weak positivity becomes trivial.
\end{example}

\subsection{Hyperplane sections and residues}

Instead of boring the reader with a list of axioms, let me motivate the use of
$\Dmod$-modules by an example. The example is the residue description for the
cohomology of hyperplane sections, something that I though about a lot for my
dissertation. This is a good place to acknowledge the great influence that Herb has
had on my mathematical interests: I basically learned about mixed Hodge modules by
trying to understand some of his constructions with residues and differential
operators.

Let $X$ be a smooth projective variety of dimension $n$, and let $Y \subseteq X$ be a
smooth and very ample divisor in $X$. According to the Lefschetz theorems, the
cohomology groups $H^k(Y) = H^k(Y, \CC)$ are determined by those of $X$, with the
exception of the so-called \define{variable part}
\[
	\Hev^{n-1}(Y) = \ker \bigl( H^{n-1}(Y) \to H^{n+1}(X)(1) \bigr),
\]
defined as the kernel of the Gysin morphism. The variable part can be described very
nicely by residues. We denote by $\OmX^k(\ast Y)$ the sheaf of meromorphic $k$-forms
on $X$ that are holomorphic on $X \setminus Y$, but may have poles along $Y$. By
Grothendieck's theorem, the hypercohomology of the algebraic de Rham complex
\[
	\begin{tikzcd}[column sep=small]
	\Bigl\lbrack \OX(\ast Y) \rar{d} & \OmX^1(\ast Y) \rar{d} &
		\dotsb \rar{d} & \OmX^n(\ast Y) \Bigr\rbrack
	\end{tikzcd}
\]
computes $H^{\ast}(X \setminus Y)$; this is a consequence of the fact that $X
\setminus Y$ is affine. From the long exact cohomology sequence
\[
	\dotsb \to H^{n-2}(Y)(-1) \to H^n(X) \to H^n(X \setminus Y) \to H^{n-1}(Y)(-1)
		\to H^{n+1}(X) \to \dotsb
\]
we obtain a short exact sequence
\[
\begin{tikzcd}[column sep=small]
0 \rar & H_0^n(X) \rar & H^n(X \setminus Y) \rar & \Hev^{n-1}(Y)(-1) \rar & 0 \\
& & H^0 \bigl( X, \OmX^n(\ast Y) \bigr) \uar 
	\arrow[swap,bend right=30,start anchor={[yshift=0.8ex]}]{ur}{\Res_Y}
\end{tikzcd}
\]
The arrow labeled $\Res_Y$ is the so-called \emph{residue mapping}; under our assumptions on
$Y$, it is surjective. Carlson and Griffiths \cite{CarlsonGriffiths} have shown that,
if the line bundle $\OX(Y)$ is sufficiently ample, then the Hodge filtration on
$\Hev^{n-1}(Y)$ is the filtration by pole order. More precisely, their result is that
\[
	\Res_Y \colon H^0 \bigl( X, \OmX^n(kY) \bigr) \to F^{n-k} \Hev^{n-1}(Y)
\]
is surjective. For $k=1$, this recovers the familiar fact that $\Hev^{n-1,0}(Y)$ is
generated by residues of logarithmic $n$-forms on $X$.
	
The result of Carlson and Griffiths also works in families. Let $L$ be a very ample
line bundle on $X$, and let $P = \lvert L \rvert$ denote the linear system of its
sections. Each point $p \in P$ corresponds to a hypersurface $Y_p \subseteq X$; we
denote by $P_0 \subseteq P$ the set of points where $Y_p$ is smooth, and by $j \colon
P_0 \into P$ the inclusion. Let
\[
	\famY = \menge{(p, x) \in P \times X}{x \in Y_p} \subseteq P \times X
\]
be the incidence variety. Just as above, we can use residues to express the
variable part in the variation of Hodge structure $R^{n-1} p_{1\ast}
\QQ_{\famY}$; more precisely, what we get is a description of the underlying flat
vector bundle $(\shHev, \nabla)$ and the Hodge filtration $F^{\bullet} \shHev$ on it.

\subsection{Extension to singular hyperplane sections}

It is an interesting question whether the description from above can also tell us
something about singular hypersurfaces. The answer is that we can use residues to
construct a natural extension of the bundle $\shH$ from $P_0$ to all of $P$; this idea is
due to Herb Clemens. Suppose for a moment that we have a relative $n$-form 
\[
	\omega \in \Gamma \bigl( U \times X, \Omega_{P \times X/P}^n(\ast \famY) \bigr)
\]
with poles along $\famY$, defined over an open subset of the form $U \times X$. At
each point $p \in U \cap P_0$, the hypersurface $Y_p$ is smooth, and so we have a
well-defined residue
\[
	\Res_{Y_p} \bigl( \omega \restr{\{p\} \times X} \bigr) \in \Hev^{n-1}(Y_p).
\]
In this way, we obtain a holomorphic section of the bundle $\shHev$ over the open set
$U \subseteq P$; for simplicity, we shall denote it by the symbol $\Res(\omega)$. We
can then define a subsheaf $\Mmod \subseteq \jl \shH$ by the following rule: its
sections over an open subset $U \subseteq P$ are given by
\[
	\Gamma(U, \Mmod) = \Menge{s \in \Gamma(U \cap P_0, \shH)}{%
		\text{$s = \Res(\omega)$ for some $\omega \in \Gamma \bigl( U \times X, 
			\Omega_{P \times X/P}^n(\ast \famY)$}}.
\]
It has a natural filtration $F_{\bullet} \Mmod$ by pole order, defined as follows:
\[
	\Gamma(U, F_k \Mmod) = \Menge{s \in \Gamma(U, \Mmod)}{%
		\text{$s = \Res(\omega)$ for some $\omega \in \Gamma \bigl( U \times X, 
			\Omega_{P \times X/P}^n(k \famY)$}}.
\]
The point is that we consider only those sections of $\jl \shH$ that are the residue
of a meromorphic form with poles along $\famY$; note that the meromorphic form needs
to be defined on all of $U$, including over points of $P \setminus P_0$. This makes
sense because the incidence variety $\famY$ is actually a smooth hypersurface in $P
\times X$.

Provided that the line bundle $L$ is sufficiently ample, the theorem of Carlson and
Griffiths from above shows that we have
\[
	F_k \Mmod \restr{P_0} \simeq F_{k-n} \shHev = F^{n-k} \shHev
		\quad \text{and} \quad
	\Mmod \restr{P_0} \simeq \shHev.
\]

Now $\Mmod$ is naturally a left $\Dmod_P$-module: differential
operators on $P$ act by differentiating the coefficients of $\omega$. More precisely,
given a vector field $\xi \in \Gamma(U, \shT_P)$, let $\xit \in \Gamma(U \times X,
\shT_{P \times X})$ denote the obvious lifting to the product; then if $s =
\Res(\omega)$, we can take the Lie derivative and define
\[
	\xi \cdot s = \Res \bigl( \mathcal{L}_{\xit} \omega \bigr).
\]
Concretely, this means that we contract $d \omega$ with $\xit$, and then take the
residue of the resulting relative $n$-form. It is easy to see that the filtration
$F_{\bullet} \Mmod$ is compatible with the order of differential operators -- after
all, differentiating $k$ times will increase the order of the pole by $k$. One can
also compute the characteristic variety $\Ch(\Mmod)$ and prove in this way that
$\Mmod$ is holonomic. The characteristic variety turns out to be closely related to
the set 
\[
	\famY' = \menge{(p, x) \in \famY}{\text{$Y_p$ is singular at $x$}}
\]
of singular points in the fibers of $p_1 \colon \famY \to P$; this is not surprising,
because those are exactly the points where one cannot take a residue in the classical
sense. Both $\famY$ and $\famY'$ are projective bundles over $X$, of
dimension $\dim X + \dim P - 1$ and $\dim P - 1$, respectively; they naturally embed
into the projectivization of $T^{\ast} P$. 

\begin{lemma}
If $\shHev \neq 0$, then the characteristic variety of $\Mmod$ is given by
\[
	\Ch(\Mmod) = \bigl( \text{the zero section of $T^{\ast} P$} \bigr) \cup
		\bigl( \text{the cone over $\famY'$} \bigr).
\]
In particular, both components of $\Ch(\Mmod)$ are of dimension $\dim P$, which
means that $\Mmod$ is holonomic.
\end{lemma}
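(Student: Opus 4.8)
The plan is to read $\Ch(\Mmod)$ off from the pole-order filtration $F_\bullet \Mmod$. That filtration is good: it is exhaustive, each $F_k \Mmod$ is a coherent $\shO_P$-module, and $\shT_P \cdot F_k \Mmod \subseteq F_{k+1}\Mmod$ because differentiating raises the pole order by one. Hence $\Ch(\Mmod)$ is, by definition, the support in $T^{\ast} P$ of the coherent module $\mathrm{gr}^F \Mmod$ over $\mathrm{gr}^F \Dmod_P = \operatorname{Sym}_{\shO_P} \shT_P$, and everything reduces to computing this associated graded. Over the open set $P_0$ there is nothing to do: there $\Mmod$ is the flat bundle underlying the variation of Hodge structure $\shHev$, which is nonzero by hypothesis, and the characteristic variety of a variation of Hodge structure is the zero section; passing to closures, the zero section of $T^{\ast} P$ is one of the two components in the statement.

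The heart of the argument is the behaviour near the discriminant, and I would obtain it from Griffiths's pole-order reduction carried out with the parameter $p$ present. Choose local coordinates $x_1, \dots, x_n$ on $X$ and a local equation $g = g(p,x)$ of the smooth hypersurface $\famY \subseteq P \times X$; a local section of $\Mmod$ is then a residue of a relative form $\omega = h \, dx_1 \wedge \dots \wedge dx_n / g^k$. For a vector field $\xi$ on $P$ one computes $\mathcal{L}_{\xit} \omega \equiv -k \, h \, (\xit g)\, dx_1 \wedge \dots \wedge dx_n / g^{k+1}$ modulo pole order $k$, so the symbol of $\xi$ acts on $\mathrm{gr}^F \Mmod$ as multiplication by a unit times $\xit g = \sum_j c_j \, \partial_{p_j} g$, where $\xit = \sum_j c_j \, \partial_{p_j}$ is the lift of $\xi$ to $P \times X$. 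On the other hand, the relative de Rham relations show that raising the pole order from $k$ to $k+1$ introduces no residue classes beyond those governed by the relative Jacobian ideal $\mathcal{J} = (\, g,\ \partial_{x_1} g,\ \dots,\ \partial_{x_n} g \,)$, whose vanishing locus is exactly $\famY' = \{\, g = 0,\ \partial_{x_1} g = \dots = \partial_{x_n} g = 0 \,\}$. The point of the ampleness hypothesis on $L$, through the theorem of Carlson and Griffiths, is to make this reduction as efficient as possible: it is what guarantees that $F_k \Mmod$ is genuinely the residues of pole-order-$\leq k$ forms, and that the obstruction to lowering the pole order is the whole of $\shO_{P\times X}/\mathcal{J}$ rather than a proper quotient. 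Combining the two observations, $\mathrm{gr}^F \Mmod$ is, away from the zero section, isomorphic to $\shO_{P\times X}/\mathcal{J}$ with $\shT_P$ acting through $\xi \mapsto \xit g$, so that its support is $\{\, (p, x, \zeta) : (p,x) \in \famY',\ \zeta_j \in \CC \cdot \partial_{p_j} g(x) \,\}$ -- precisely the cone over the canonical copy of $\famY'$ in $\mathbb{P}(T^{\ast} P)$.

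As a cross-check, and to see holonomicity painlessly, I would also push forward along $p_1 \colon P \times X \to P$, which is proper because $X$ is projective. The localization $\shO_{P \times X}(\ast \famY)$ is regular holonomic with characteristic variety $0_{P \times X} \cup T^{\ast}_{\famY}(P \times X)$, and, up to the Lefschetz-trivial part, the residue map identifies $\Mmod$ with a subquotient of the cohomology modules of $p_{1+} \shO_{P \times X}(\ast \famY)$ -- the residue being, in $\Dmod$-module language, the connecting morphism attached to the triangle relating $\shO_{P\times X}$, $\shO_{P\times X}(\ast\famY)$ and $\shO_{\famY}$. The standard estimate for the characteristic variety of a proper direct image then yields $\Ch(\Mmod) \subseteq 0_P \cup (\text{cone over } \famY')$, because $dg = \sum_j \partial_{p_j} g \, dp_j + \sum_i \partial_{x_i} g \, dx_i$ has vanishing $X$-component exactly along $\famY'$; this confirms the answer and shows that $\Mmod$ is holonomic.

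Putting the computations together gives $\Ch(\Mmod) = (\text{zero section}) \cup (\text{cone over } \famY')$. For the final assertion, the zero section has dimension $\dim P$, and since $\dim \famY' = \dim P - 1$ its cone in $T^{\ast} P$ again has dimension $\dim P$; both components are therefore Lagrangian, and $\Mmod$ is holonomic. I expect the genuine difficulty to lie in the second paragraph: the relative form of Griffiths's reduction, keeping careful track of the dependence on $p$ and invoking Carlson--Griffiths so that the full $\famY'$ -- and not merely a proper subvariety -- shows up in the support of $\mathrm{gr}^F \Mmod$. Reconciling the globally defined module $\Mmod \subseteq \jl \shH$ with this essentially local bookkeeping will also need a word, though it is built into the definition of $\Mmod$.
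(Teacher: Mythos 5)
The paper does not actually prove this lemma: it is stated in the course of an extended motivational example, and the reader is referred to \cite{Schnell-R} for the details, with the only hint being the remark that ``the key point in the proof is that the incidence variety $\famY$ is smooth.'' So there is no in-paper argument to compare yours against line by line. What I can do is assess your sketch on its merits and against the hints the paper does give.

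Your route is sound in outline and is the natural one: use the pole-order filtration as a good filtration, compute $\mathrm{gr}^F\Mmod$ in local coordinates via a relative Griffiths pole-order reduction, and read off the support in $T^{\ast}P$ from the $\operatorname{Sym}(\shT_P)$-module structure (with $\zeta_j$ acting, up to a harmless scalar, by multiplication by $\partial_{p_j}g$). You correctly identify where the smoothness of $\famY$ enters (a global smooth local equation $g$, hence $dg\neq 0$ along $\famY'$, which is what makes the embedding $\famY'\into\mathbb{P}(T^{\ast}P)$ well defined), and you correctly identify the real work as making the Carlson--Griffiths exactness statement robust enough to see all of $\famY'$ in the support, not a proper subvariety. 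Your third paragraph supplies the complementary upper bound $\Ch(\Mmod)\subseteq 0_P\cup(\text{cone over }\famY')$ via the proper direct image estimate for $p_1$ applied to $\shO_{P\times X}(\ast\famY)$, and this bound plus the dimension count is what actually proves holonomicity; together with the lower bound from the $\mathrm{gr}^F$ computation it pins down $\Ch(\Mmod)$ exactly. Two small cautions: (i) the sentence claiming $\mathrm{gr}^F\Mmod\simeq\shO_{P\times X}/\mathcal{J}$ is an abuse of language -- these live on different spaces, and what is meant is that $\mathrm{gr}^F\Mmod$ is a pushforward of a twist of $\shO_{\famY'}$, with the $\operatorname{Sym}(\shT_P)$-action given by $\zeta\mapsto\langle\zeta,d_pg\rangle$; and (ii) ``dimension $\dim P$'' by itself gives half-dimensional, not Lagrangian -- you get Lagrangian from half-dimensional plus the automatic involutivity of $\Ch$, which you should say explicitly if you want that remark. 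With those caveats, and bearing in mind that your second paragraph is avowedly a sketch, the proposal is a reasonable and essentially correct reconstruction of the argument the paper outsources to \cite{Schnell-R}.
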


One can show that $(\Mmod, F_{\bullet + n} \Mmod)$ is part of a Hodge module on $P$
that naturally extends the variation of Hodge structure $\shHev$. You can find the
details in the paper \cite{Schnell-R}; the key point in the proof is that the
incidence variety $\famY$ is smooth. 

\subsection{Conclusion}

The message to take away from this is that Hodge modules are the correct
generalization of variations of Hode structure. Indeed, here we have one example
where we get a natural extension of a variation of Hodge structure (with the help of
residues) \dots\ and it turns out that the extension is precisely the same as the one
given by Saito's theory! I should add that I have found this particular example to be
very useful for learning about mixed Hodge modules.

\section{Proof of the main theorem}

The remainder of the paper is devoted to the proof of \theoremref{thm:main}. The
proof closely follows Koll\'ar's, but we replace certain geometric arguments by
abstract results about mixed Hodge modules. As I have said before, I do not claim that
this proof is in any
way simpler than the original one; only that it is shorter, and therefore perhaps
better suited for generalizations.

\subsection{Saito's vanishing theorem}

The main ingredient is the following vanishing theorem for the first nonzero sheaf
in the Hodge filtration of a mixed Hodge module \cite{Saito-K}*{Theorem~1.2}.

\begin{theorem}[Saito] \label{thm:Saito}
Let $(\Mmod, F_{\bullet})$ be the filtered $\Dmod$-module underlying a mixed Hodge
module on a smooth projective variety $X$. If $F_{k-1} \Mmod = 0$, then 
\[
	H^i \bigl( X, \omX \tensor L \tensor F_k \Mmod \bigr) = 0
\]
for every $i > 0$ and every ample line bundle $L$ on $X$.
\end{theorem}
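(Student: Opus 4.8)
The plan is to follow the cyclic covering argument that Esnault and Viehweg used for Kodaira--Nakano vanishing, substituting Saito's formalism of mixed Hodge modules for the classical Hodge theory of a smooth projective variety. The structural inputs I would take for granted are: (i) for a projective morphism the direct image in the derived category of mixed Hodge modules is strict for the Hodge filtration, which applied to the map to a point is the degeneration of the spectral sequence
\[
	E_1^{p,q} = \mathbb{H}^{p+q}\bigl(X, \mathrm{gr}^F_{-p}\mathrm{DR}(\Mmod)\bigr)
		\Longrightarrow \mathbb{H}^{p+q}\bigl(X, \mathrm{DR}(\Mmod)\bigr),
\]
and more generally that $\mathrm{gr}^F \mathrm{DR}$ commutes with $\derR g_{\ast}$; (ii) the pullback of a mixed Hodge module under a finite morphism is again a mixed Hodge module, with a controllable effect on the Hodge filtration; and (iii) the decomposition by strict support together with the structure theorem identifying a pure Hodge module, generically, with a polarizable variation of Hodge structure.

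The steps would be as follows. Using the weight filtration together with the strictness of $F_{\bullet}$ in short exact sequences of mixed Hodge modules, one reduces to a pure Hodge module, and then, via the decomposition by strict support, to a pure Hodge module with strict support an irreducible subvariety $Z \subseteq X$. Next, choose $\ell \gg 0$ so that $L^{\tensor \ell}$ has a smooth member $D$, and form the $\ell$-fold cyclic cover $\pi \colon Y \to X$ branched along $D$: then $Y$ is smooth projective, $\omega_Y \simeq \pi^{\ast}(\omX \tensor L^{\tensor(\ell-1)})$, and $\pi_{\ast}\shO_Y = \bigoplus_{i=0}^{\ell-1} L^{\tensor(-i)}$. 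Pulling $M$ back to a mixed Hodge module on $Y$ with underlying filtered $\Dmod$-module $\Nmod$ (for which the bottom-index hypothesis persists, up to a shift, so that $\mathrm{gr}^F_{\bullet}\mathrm{DR}_Y(\Nmod)$ again has $\omega_Y \tensor F_k\Nmod$ as its top term), the projection formula shows that $\pi_{\ast}(\omega_Y \tensor F_k\Nmod)$ is a direct sum in which, for $\ell$ large, $\omX \tensor L \tensor F_k\Mmod$ occurs as a summand; since $\pi$ is finite, $H^i\bigl(X, \omX \tensor L \tensor F_k\Mmod\bigr)$ is thus a direct summand of the hypercohomology of $\mathrm{gr}^F_{\bullet}\mathrm{DR}_Y(\Nmod)$ in the appropriate degree. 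Finally one feeds this into (i) on $Y$: decompose $\pi_{\ast}\pi^{\ast}M$ into its eigen-sub-mixed-Hodge-modules $\Mmod_0,\dots,\Mmod_{\ell-1}$ on $X$, whose de Rham complexes are twisted logarithmic de Rham complexes along $D$, and run the Esnault--Viehweg dévissage: the hypercohomology of the cyclic cover carries a mixed Hodge structure whose weight bounds, combined with the $E_1$-degeneration, force the offending cohomology groups to vanish.

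The main obstacle -- the place where one genuinely needs Saito's theory rather than a formal import of Esnault--Viehweg -- is the bookkeeping of the Hodge filtration through the finite pullback and the eigensheaf decomposition: one must know exactly that the bottom of the Hodge filtration of $\Nmod$, respectively of the Kummer-type pieces $\Mmod_i$, is $\pi^{\ast}F_k\Mmod$ tensored with the expected power of $\pi^{\ast}L$, respectively a canonical extension of $F_k\Mmod$ with the correct pole order along $D$, and that these identifications are compatible with the decomposition of $\mathrm{gr}^F\mathrm{DR}$ under $\derR\pi_{\ast}$. Equivalently, one could avoid the cyclic cover on $X$ and reduce -- via a resolution of $Z$, a finite base change making the monodromy unipotent, and the decomposition theorem -- to the case where $M$ arises from a polarizable variation of Hodge structure with unipotent local monodromy on the complement of a normal crossing divisor in a smooth projective variety; there $F_k\Mmod$ is (a subsheaf of) a Deligne canonical extension, and the required vanishing $H^i(\omX \tensor (\text{big and nef}) \tensor F_k\Mmod)=0$ is the Hodge-theoretic vanishing theorem of Kawamata and Koll\'ar. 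Either way the hard content is the same, namely that the bottom piece of a Hodge filtration has the cohomological positivity of a canonical extension of a Hodge bundle, and the point of mixed Hodge modules is only to make the reduction uniform.
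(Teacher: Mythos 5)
The paper does not prove this theorem: it is quoted verbatim from Saito's paper \cite{Saito-K}*{Theorem~1.2} and used as a black box (the proof section of the present paper begins by \emph{assuming} \theoremref{thm:Saito} and \lemmaref{lem:ius}). So there is no internal argument to compare your sketch against; the honest answer to ``what does the paper do here'' is ``cite Saito.''

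That said, your outline is the right strategy, and it is close both to Saito's own argument and, even more so, to the Esnault--Viehweg-style re-proof that the author of this paper later published separately (\emph{On Saito's vanishing theorem}, Math.\ Res.\ Lett.\ 23 (2016)). The ingredients you isolate---strictness of Hodge-filtered direct images (degeneration at $E_1$), compatibility of $\operatorname{gr}^F\mathrm{DR}$ with $\derR g_{\ast}$, reduction via the weight filtration and strict-support decomposition to a pure Hodge module, and the $\ell$-fold cyclic covering feeding into the Kummer eigen-decomposition of $\pi_{\ast}\pi^{\ast}M$---are exactly the right ones, and your alternative reduction to a VHS with unipotent monodromy over the complement of a normal crossing divisor, where the theorem becomes Kawamata--Koll\'ar vanishing for the Deligne canonical extension, is indeed what the argument becomes when fully unwound. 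Where the sketch is genuinely incomplete, and you acknowledge this yourself, is the filtration bookkeeping and the final step: one has to verify that the lowest piece of the Hodge filtration of the pullback (and of each Kummer eigen-piece, which is a twist of a canonical extension with log poles along the branch divisor) is what you claim, that this is compatible under $\derR\pi_{\ast}$ with the eigensheaf decomposition of $\operatorname{gr}^F\mathrm{DR}$, and that the closing move is not really a ``weight bound'' but the Esnault--Viehweg dimension count forced by $E_1$-degeneration together with Hodge symmetry on the cover. These are the hard parts of Saito's theorem, not formalities, so as a proof the sketch has real gaps; as a roadmap to the literature it is accurate. For the purposes of the present paper none of this matters, since the theorem enters only as a cited input.
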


The vanishing theorem implies that $\omX \tensor L^{\tensor(n+1)} \tensor F_k \Mmod$
is $0$-regular (in the sense of Castelnuovo-Mumford), and therefore always globally
generated (here and below, $n = \dim X$). To prove that $F_k \Mmod$ is weakly
positive, we have to find a way to raise the term $F_k \Mmod$ in the formula to a
large power. In Koll\'ar's proof, this is accomplished by considering the fiber
product $X \times_Y \dotsm \times_Y X \to Y$; it requires some analysis of the
singularities. We shall replace this geometric argument by the following abstract
result about mixed Hodge modules.

\subsection{A useful lemma}

Next, we recall a useful lemma about restriction to submanifolds; it appears in
\cite{Schnell-N}*{Lemma~2.17}. The symbol $\ius M$ in the statement refers to a
certain pullback operation on mixed Hodge modules (in the derived category); the
precise definition does not matter for our purposes.

\begin{lemma} \label{lem:ius}
Let $M$ be a mixed Hodge module on a smooth complex algebraic variety $X$, and let
$(\Mmod, F)$ denote the underlying filtered $\Dmod_X$-module.  Let $i \colon Z \into
X$ be the inclusion of a smooth subvariety of codimension $r$, and let $\ius(\Mmod,
F)$ denote the complex of filtered $\Dmod_Z$-modules underlying the object $\ius M
\in \Db \MHM(Z)$. 
\begin{aenumerate}
\item \label{en:ius-a}
For every $p \in \ZZ$, there is a canonical morphism
\[
	F_{k-r} \, \ius(\Mmod, F) \to \derL \iu (F_k \Mmod) \decal{-r}
\]
in the derived category $\Dbcoh(\OZ)$. 
\item \label{en:ius-b}
The morphism in \ref{en:ius-a} is an isomorphism over the locus where $M$ is smooth.
\end{aenumerate}
\end{lemma}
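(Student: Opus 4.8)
The plan is to construct the morphism locally on $X$, verify that it is canonical, and glue; part~\ref{en:ius-b} is a local statement as well. Since $Z$ is locally cut out by a regular sequence of coordinate functions $t_1, \dotsc, t_r$, I would first factor $i$ through a tower of closed immersions of smooth divisors,
\[
	Z = Z_r \into Z_{r-1} \into \dotsb \into Z_1 \into Z_0 = X,
	\qquad Z_j = \{\, t_1 = \dotsb = t_j = 0 \,\},
\]
and use that both $\ius$ on $\Db \MHM$ and $\derL \iu$ are compatible with composition of closed immersions, that the shift $\decal{-r}$ is additive in the codimension, and that a filtration-index shift of one per step accumulates to the shift $F_k \rightsquigarrow F_{k-r}$ appearing in the statement. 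This reduces the construction of the morphism in part~\ref{en:ius-a} to the case of a single smooth divisor $Z = \{ t = 0 \}$; one then composes the morphisms attached to the successive steps $Z_j \into Z_{j-1}$ and checks that the result does not depend on the chosen flag and descends to a global morphism, which should be a routine diagram chase.

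For a smooth divisor $Z = \{ t = 0 \}$, let $V^{\bullet} \Mmod$ be the $V$-filtration of $\Mmod$ along $Z$. Saito's construction presents $\ius M$, on the level of the underlying filtered $\Dmod$-module, as a two-term complex in cohomological degrees $0$ and $1$, built from the $V$-graded pieces $\operatorname{gr}_V^0 \Mmod$ and $\operatorname{gr}_V^1 \Mmod$ --- i.e.\ from the unipotent vanishing and nearby cycles of $\Mmod$ along $t$ --- with its Hodge filtration induced by $F_{\bullet}$ on those pieces and a Tate twist by $(-1)$ on one of them. On the other side, $\derL \iu (F_k \Mmod) \decal{-1}$ is, via the Koszul resolution of $\OZ$ over $\OX$, the two-term complex $\bigl[\, F_k \Mmod \xrightarrow{\;t\;} F_k \Mmod \,\bigr]$ in degrees $0$ and $1$. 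The comparison morphism then comes from the canonical maps relating $\operatorname{gr}_V^0 \Mmod$ and $\operatorname{gr}_V^1 \Mmod$ to $\Mmod$ itself --- the inclusions of $V$-filtration steps and the operators $t$ and $\partial_t$ --- restricted to the Hodge filtration; that these maps assemble into a morphism of complexes, strictly compatible with $F$ and independent of the chosen coordinate, is exactly where one uses that $(\Mmod, F)$ underlies a mixed Hodge module, through the strictness and compatibility of the Hodge filtration with the $V$-filtration that are part of Saito's axioms. The passage from $F_k$ to $F_{k-r}$ reflects the Tate twist by $(-r)$ built into the shriek pullback, as one already sees for $M = \QHX \decal{\dim X}$, where $\ius M$ is the trivial Hodge module on $Z$ placed in cohomological degree $r$ and Tate-twisted by $(-r)$. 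I expect this bookkeeping --- matching Saito's conventions for $\ius$ against the naive derived pullback, and reconciling the cohomological shift $\decal{-r}$ with the index shift $F_k \rightsquigarrow F_{k-r}$ --- to be the main obstacle; once the maps are written down correctly, nothing deep remains.

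For part~\ref{en:ius-b}, I restrict to the open set over which $M$ is a variation of mixed Hodge structure. There $\Mmod$ is $\OX$-coherent --- a vector bundle with flat connection --- the piece $F_k \Mmod$ is a subbundle, hence locally free, so that $\derL \iu (F_k \Mmod) = \iu (F_k \Mmod)$ with no higher Tor; and $Z$ is non-characteristic for $\Mmod$, so the $V$-filtration along $Z$ is the $t$-adic filtration and there are no vanishing cycles. Consequently $\ius M$ reduces, as it must, to the pulled-back variation $\iu \Mmod$ placed in cohomological degree $r$ and Tate-twisted by $(-r)$; hence $F_{k-r} \, \ius(\Mmod, F)$ is just $\iu(F_k \Mmod)$ sitting in degree $r$, which is precisely $\derL \iu (F_k \Mmod) \decal{-r}$, and the morphism of part~\ref{en:ius-a} is this identification. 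The general-codimension case of part~\ref{en:ius-b} then follows by composition.
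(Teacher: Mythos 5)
Your proof of part~\ref{en:ius-b} is essentially the same as the paper's: over the smooth locus you restrict to the associated variation of mixed Hodge structure, observe that $F_k\Mmod$ is a subbundle (hence $\derL\iu = \iu$ there), note that $\ius M$ is concentrated in cohomological degree $r$ and equals $\iu M(r)$, and match the two sides. The extra remark about $Z$ being non-characteristic and the $V$-filtration being $t$-adic is a correct and helpful way to see \emph{why} $\ius M$ is concentrated in a single degree, but it is not logically needed.

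For part~\ref{en:ius-a}, however, your route is genuinely different from the paper's, and substantially longer. You propose to work locally, factor $i$ through a flag of smooth divisors, write out Saito's $\ius$ for a single divisor in terms of $\operatorname{gr}_V^0$ and $\operatorname{gr}_V^1$, compare with the Koszul complex $\bigl[F_k\Mmod \xrightarrow{\, t\,} F_k\Mmod\bigr]$, then compose over the flag and glue. This can be made to work, and it has the merit of showing explicitly what the morphism does on nearby and vanishing cycles, but it carries real overhead: you must reconcile Saito's $V$-filtration and Hodge-filtration index conventions with the Koszul/Tor conventions step by step, and you must check that the resulting composite is independent of the chosen coordinates and of the flag $Z_\bullet$ before you can glue --- precisely the steps you flag as ``routine,'' and precisely where such arguments usually go wrong. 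The paper sidesteps all of this with a single global adjunction argument: $\ius$ is the right adjoint of $\il$ on $\Db\MHM$, so one has a canonical morphism $\il\ius M \to M$; passing to filtered $\Dmod$-modules and using the one structural input --- the canonical morphism $\il\bigl(\omega_Z\tensor F_{k-r}\,\ius(\Mmod,F)\bigr)\to\omega_X\tensor F_k\bigl(\il\ius(\Mmod,F)\bigr)$ built into Saito's definition of filtered direct image under a codimension-$r$ closed embedding --- yields a map $\il\bigl(\omega_{Z/X}\tensor F_{k-r}\,\ius(\Mmod,F)\bigr)\to F_k\Mmod$; then Grothendieck--Serre adjunction for $\il$ on $\Dbcoh$ (with $\derL\ius=\omega_{Z/X}\decal{-r}\tensor\derL\iu$) produces the desired morphism in one stroke, already global and canonical, with the index shift $F_k\rightsquigarrow F_{k-r}$ and the cohomological shift $\decal{-r}$ both falling out of the relative dualizing sheaf and the codimension rather than being tracked by hand. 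If you pursue your approach you should at minimum replace ``routine diagram chase'' with an actual argument for coordinate- and flag-independence; but I would encourage you to look for the adjunction formulation instead, which is what makes the lemma short enough to be useful.
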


\begin{proof}
The functor $\ius$ is the right adjoint of $\il$, and since $i$ is a closed
embedding, we have the adjunction morphism $\il \ius M \to M$ in $D^b \MHM(X)$.
Passing to filtered $\Dmod$-modules, we then get a morphism $F_k \bigl( \il
\ius(\Mmod, F) \bigr) \to F_k \Mmod$. Now $i$ is a closed embedding of codimension
$r$, and because of how the direct image functor is defined in \cite{Saito-HM}*{2.3},
we have a canonical morphism
\[
	\il \bigl( \omega_Z \tensor F_{k-r}\, \ius(\Mmod, F) \bigr)
		\to \omega_X \tensor F_k \bigl( \il \ius(\Mmod, F) \bigr).
\]
Let $\omega_{Z/X} = \omega_Z \tensor \iu \omega_X^{-1}$. Composing the two morphisms,
we obtain
\[
	\il \bigl( \omega_{Z/X} \tensor F_{k-r} \, \ius(\Mmod, F) \bigr)
		\to F_k \Mmod.
\]
On the level of coherent sheaves, the functor $\derL \ius = \omega_{Z/X} \decal{-r}
\tensor \derL \iu$ is the right adjoint of $\il$; by adjunction, we therefore get the
desired morphism 
\[
	F_{k-r} \, \ius(\Mmod, F) \to \omega_{Z/X}^{-1} \tensor \derL \ius(F_k \Mmod)
		= \derL \iu(F_k \Mmod) \decal{-r}
\]
in the derived category $\Dbcoh(\shO_Z)$. 

Now let us prove \ref{en:ius-b}. After restricting to the open subset where $M$ is
smooth, we may assume that $M$ is the mixed Hodge module associated with a variation
of mixed Hodge structure; in particular, all the sheaves $F_k \Mmod$, as well as
$\Mmod$ itself, are locally free. In this situation, the complex $\ius M$ is
concentrated in degree $r$, and $H^r \ius M$ is simply the restriction of $M(r)$ to
$Z$. It follows that
\[
	F_{k-r} \, \ius(\Mmod, F) = \iu(F_k \Mmod) \decal{-r}.
\]
On the other hand, $\derL \iu(F_k \Mmod) = \iu(F_k \Mmod)$ because $F_k \Mmod$ is
locally free. It is then easy to see from the construction that the morphism in
\ref{en:ius-a} is an isomorphism.
\end{proof}

\subsection{The proof}

Now consider a mixed Hodge module $M$ as in \theoremref{thm:main}. Let $(\Mmod,
F_{\bullet} \Mmod)$ denote the underlying filtered $\Dmod$-module; without loss of
generality, we may assume that $\Supp \Mmod = X$. We apply \lemmaref{lem:ius} to the
diagonal embedding
\[
	\Delta \colon X \into X \times \dotsm \times X;
\]
if there are $m$ factors, its codimension is $r = (m-1)n$. On $X^m$, we have a mixed
Hodge module
\[
	\Mt = M \boxtimes \dotsm \boxtimes M.
\]
The Hodge filtration on the underlying $\Dmod$-module $\Mmodt$ is defined by
convolving the filtrations on the individual factors; in particular,
\[
	F_{mk-1} \Mmodt = 0 \quad \text{and} \quad
		F_{mk} \Mmodt = (F_k \Mmod)^{\boxtimes m}.
\]
\lemmaref{lem:ius} thus gives us a canonical morphism
\[
	F_{mk-r} \Delta^{!} \bigl( \Mmodt, F \bigr)
		\to \derL \Delta^{\ast} \bigl( F_{mk} \Mmodt \bigr) \decal{-r}
\]
in the derived category. If we take cohomology in degree $r$, we obtain
\[
	F_{mk-r} \, \Nmod \to (F_k \Mmod)^{\tensor m},
\]
where $\Nmod$ is the $\Dmod$-module underlying a certain mixed Hodge module on $X$.
The important thing is that this morphism is an isomorphism over the open set $U
\subseteq X$ where $M$ is a variation of mixed Hodge structure. In fact, the proof of
\lemmaref{lem:ius} shows that, over $U$, both sides are just isomorphic to the $m$-th
tensor power of the Hodge bundle $F^{-k} \shH$.

Now we can easily show that $F_k \Mmod$ is weakly positive on $U$. Fix a very ample
line bundle $L$ on $X$. \theoremref{thm:Saito} implies that the sheaf 
\[
	\omega_X \tensor L^{\tensor(n+1)} \tensor F_{mk-r} \, \Nmod
\]
is $0$-regular and therefore globally generated. Because the morphism
\[
	\omega_X \tensor L^{\tensor(n+1)} \tensor F_{mk-r} \, \Nmod 
		\to \omega_X \tensor L^{\tensor(n+1)} \tensor (F_k \Mmod)^{\tensor m}
\]
is an isomorphism over $U$, it follows that the sheaf
\[
	\omega_X \tensor L^{\tensor(n+1)} \tensor (F_k \Mmod)^{\tensor m}
\]
is generated over $U$ by its global sections. Since $m$ was arbitrary, this implies
that $F_k \Mmod$ is weakly positive on $U$.

\section*{Acknowledgement}

I thank the other organizers, especially Gary Kennedy and Mirel Caib\u{a}r, for making
the conference a success. I am grateful to Mihnea Popa for many discussions about
Hodge modules and weak positivity, and to Mark Green for pointing out the correct
reference for the result by Carlson and Griffiths.

\bibliographystyle{amsplain}
\begin{bibdiv}
\begin{biblist}

\bib{CarlsonGriffiths}{incollection}{
      author={Carlson, James~A.},
      author={Griffiths, Phillip~A.},
       title={Infinitesimal variations of {H}odge structure and the global
  {T}orelli problem},
        date={1980},
   booktitle={Journ\'ees de {G}\'eometrie {A}lg\'ebrique d'{A}ngers, {J}uillet
  1979/{A}lgebraic {G}eometry, {A}ngers, 1979},
   publisher={Sijthoff \& Noordhoff},
     address={Alphen aan den Rijn},
       pages={51\ndash 76},
      review={\MR{605336 (82h:14006)}},
}

\bib{FujinoFujisawaSaito}{article}{
      author={Fujino, Osamu},
      author={Fujisawa, Taro},
      author={Saito, Morihiko},
       title={Some remarks on the semi-positivity theorems},
        date={2013},
      eprint={arXiv:1302.6180},
}

\bib{Kawamata}{article}{
      author={Kawamata, Yujiro},
       title={Characterization of abelian varieties},
        date={1981},
        ISSN={0010-437X},
     journal={Compositio Math.},
      volume={43},
      number={2},
       pages={253\ndash 276},
         url={http://www.numdam.org/item?id=CM_1981__43_2_253_0},
      review={\MR{622451 (83j:14029)}},
}

\bib{Saito-HM}{article}{
      author={Saito, Morihiko},
       title={Modules de {H}odge polarisables},
        date={1988},
        ISSN={0034-5318},
     journal={Publ. Res. Inst. Math. Sci.},
      volume={24},
      number={6},
       pages={849\ndash 995 (1989)},
         url={http://dx.doi.org/10.2977/prims/1195173930},
      review={\MR{1000123 (90k:32038)}},
}

\bib{Saito-K}{incollection}{
      author={Saito, Morihiko},
       title={On {K}oll\'ar's conjecture},
        date={1991},
   booktitle={Several complex variables and complex geometry, {P}art 2 ({S}anta
  {C}ruz, {CA}, 1989)},
      series={Proc. Sympos. Pure Math.},
      volume={52},
   publisher={Amer. Math. Soc.},
     address={Providence, RI},
       pages={509\ndash 517},
      review={\MR{1128566 (92i:14007)}},
}

\bib{Schnell-N}{article}{
      author={Schnell, Christian},
       title={Complex analytic {N}\'eron models for arbitrary families of
  intermediate {J}acobians},
        date={2012},
        ISSN={0020-9910},
     journal={Invent. Math.},
      volume={188},
      number={1},
       pages={1\ndash 81},
         url={http://dx.doi.org/10.1007/s00222-011-0341-8},
      review={\MR{2897692}},
}

\bib{Schnell-R}{article}{
      author={Schnell, Christian},
       title={Residues and filtered {D}-modules},
        date={2012},
        ISSN={0025-5831},
     journal={Math. Ann.},
      volume={354},
      number={2},
       pages={727\ndash 763},
         url={http://dx.doi.org/10.1007/s00208-011-0746-0},
      review={\MR{2965259}},
}

\bib{Viehweg}{incollection}{
      author={Viehweg, Eckart},
       title={Weak positivity and the additivity of the {K}odaira dimension for
  certain fibre spaces},
        date={1983},
   booktitle={Algebraic varieties and analytic varieties ({T}okyo, 1981)},
      series={Adv. Stud. Pure Math.},
      volume={1},
   publisher={North-Holland},
     address={Amsterdam},
       pages={329\ndash 353},
      review={\MR{715656 (85b:14041)}},
}

\end{biblist}
\end{bibdiv}

\end{document}